\documentclass[11pt]{amsart}
\usepackage{amsmath}
\usepackage{amssymb, verbatim}
\usepackage{pstricks,pst-node,pst-plot}
\usepackage{comment}
\usepackage{color}
\usepackage{extarrows}

\title[]{Concave elliptic equations and generalized Khovanskii-Teissier inequalities}
\author[T. C. Collins]{Tristan C. Collins}
  \email{tristanc@mit.edu}
  \address{Department of Mathematics, Massachusetts Institute of Technology, 77 Massachusetts Avenue, Cambridge, MA 02139}
 \thanks{T.C.C is supported in part by NSF grant DMS-1810924 and an Alfred P. Sloan Fellowship. }
  \dedicatory{Dedicated to D.H. Phong with admiration on the occasion of his 65th birthday.}

%\date{}
\theoremstyle{plain}
\newtheorem{thm}{Theorem}[section]

\newtheorem{defn}[thm]{Definition}
\newtheorem{lem}[thm]{Lemma}
\newtheorem{cor}[thm]{Corollary}

\theoremstyle{definition}
\newtheorem{ex}[thm]{Example}
\newtheorem{rk}[thm]{Remark}
\newtheorem{prob}[thm]{Problem}

\numberwithin{equation}{section}

\newcommand{\del}{\partial}

\newcommand{\dbar}{\overline{\del}}
\newcommand{\ddb}{\sqrt{-1}\del\dbar}

\renewcommand{\leq}{\leqslant}
\renewcommand{\geq}{\geqslant}

\renewcommand{\epsilon}{\varepsilon}
\renewcommand{\phi}{\varphi}

\begin{document}

\maketitle

\begin{abstract}
We explain a general construction through which concave elliptic operators on complex manifolds give rise to concave functions on cohomology.  In particular, this leads to generalized versions of the Khovanskii-Teissier inequalities.
\end{abstract}

\section{Introduction}

The purpose of this paper is to explain how concave elliptic equations on compact complex manifolds give rise to concave functions on (certain subsets of) ``cohomology".  For example, if $(X,\omega)$ is compact K\"ahler, we obtain concave functions on (certain subsets) of $H^{1,1}(X,\mathbb{R})$.  The prototype for the type function we consider is the volume function on K\"ahler classes.  Let $\mathcal{K}\subset H^{1,1}(X,\mathbb{R})$ be the open convex cone consisting of all $(1,1)$ classes admitting K\"ahler representatives.  For a K\"ahler class $\alpha \in \mathcal{K}$ define
\[
{\rm Vol}(\alpha) = \int_{X}\alpha^{n}
\]
Then the function $\alpha \mapsto {\rm Vol}(\alpha)^{1/n}$ is concave on $\mathcal{K}$ \cite{Dem, Kho, Gro, Teis1, Teis2, Teis3}.  In particular, for any K\"ahler classes $\alpha_0, \alpha_1$ we have
\begin{equation}\label{eq: introBM}
{\rm Vol}(\alpha_0+\alpha_1)^{1/n} \geq {\rm Vol}(\alpha_0)^{1/n} + {\rm Vol}(\alpha_1)^{1/n}.
\end{equation}
Furthermore, if $\beta \in H^{1,1}(X,\mathbb{R})$ is any class (not necessarily K\"ahler), then since $\mathcal{K}$ is open, $\alpha+t\beta \in \mathcal{K}$ for $t$ sufficiently close to zero.  Differentiating the volume function yields
\[
\left(\int_{X}\beta^2\wedge\alpha^{n-2}\right)\left(\int_{X}\alpha^{n}\right) \leq \left(\int_{X}\beta\wedge\alpha^{n-1}\right)^{2}.
\]
This latter inequality, originally discovered by Kohvanskii \cite{Kho} and Teissier \cite{Teis2, Teis3}, can be viewed as a generalization of the Hodge Index Theorem.  Many parts of the preceding discussion have in fact been extended beyond the K\"ahler cone, to the big cone where the definition of the volume function must be appropriately extended; see \cite{Bou, Laz, ELMNP, LM}.   

In this paper we will explain how the concavity of the volume function is a special case of a more general result which applies to a broad class of concave elliptic PDEs on complex manifolds.  For the time being we will be deliberately vague about what kind of classes we are considering, as they will vary between applications.  In any event,  to motivate our discussion, let us recall Demailly's proof of the Khovanskii-Teissier inequalities \cite{Dem}.  First, observe that for constant forms on a complex torus $\mathbb{C}^{n}\backslash \Lambda$, the concavity of the volume function is equivalent to the well-known concavity of the map
\[
{\rm Herm}_{+}(n) \ni M \longmapsto {\rm \det}(M)^{\frac{1}{n}} \in \mathbb{R}
\]
where ${\rm Herm}_{+}(n)$ denotes the set of positive definite, $n\times n$ Hermitian matrices.  Next, we apply Yau's solution of the Calabi conjecture \cite{Y} to reduce the global inequality to the pointwise inequality.  Namely, fix a K\"ahler form $\omega$, and assume that $\int_{X}\omega^{n}=1$.  By Yau's theorem \cite{Y} we can find a K\"ahler forms $\tilde{\alpha_i}$ with $[\alpha_i]=[\tilde{\alpha}_i]$ for $i=0,1$ satisfying
\begin{equation}\label{eq: introCMA}
\tilde{\alpha}_i^n = \left(\frac{[\alpha_i]^n}{[\omega]^n}\right) \omega^n.
\end{equation}
Applying the point-wise concavity we clearly have
\[
\left(\frac{(\tilde{\alpha}_0 + \tilde{\alpha}_1)^n}{\omega^n}\right)^{1/n} \geq \left(\frac{\tilde{\alpha}_0^n}{\omega^n}\right)^{1/n} +\left(\frac{\tilde{\alpha}_1^n}{\omega^n}\right)^{1/n}
\]
at each point $p\in X$.  Combining this with~\eqref{eq: introCMA} yields
\[
(\tilde{\alpha}_0 + \tilde{\alpha}_1)^n \geq \left[\left(\frac{[\alpha_0]^n}{[\omega]^n}\right)^{1/n} +\left(\frac{[\alpha_1]^n}{[\omega]^n}\right)^{1/n}\right]^n \omega^n.
\]
Integrating this inequality over $X$ yields~\eqref{eq: introBM}.  They key idea here is that the concavity of the elliptic operator $F(M)= \left(\det(M)\right)^{1/n}$ yields the concavity of the associated function on cohomology on the set of classes where the equation $F(M)= c$ is globally soluble on $(X,\omega)$.  There has recently been some interest in generalizing the Khovanskii-Teissier inequalities using different concave functions.  For some recent work in this direction, using the $\sigma_k$ operators, see \cite{Xia1, Xia2} and the references therein.

%\begin{rk}
%The results discussed in this paper were announced in a talk in Grenoble, June 9, 2017.
%\end{rk}

\section{Generalized Khovanskii-Teissier inequalities}

To begin with, we recall some of the basic definitions of the elliptic operators we will study.  Since our focus is on complex manifolds, some of our definitions will be suited to this particular case.

To begin with, fix an open, symmetric set $\Gamma \subset \mathbb{R}^n$, and denote by $\Gamma_n = \mathbb{R}^{n}_{+}$ the positive orthant.  Suppose that $f(\lambda_1, \ldots, \lambda_{n}):\Gamma\rightarrow \mathbb{R}$ is a smooth, symmetric function (in fact, one only needs $f$ to be $C^2$ for what follows, but we will not pursue this).   We suppose in addition that
\begin{itemize}
\item[(i)] $f$ is strictly increasing in each of its arguments.  That is $\frac{\del f}{\del \lambda_i} >0$.
\item[(ii)]$ f$ is concave.
\end{itemize}
By abuse of notation, denote by $\Gamma \subset {\rm Herm}(n)$ the set of Hermitian matrices whose eigenvalues lie in $\Gamma \subset \mathbb{R}^n$.  Then $f$ induces an elliptic operator on $\Gamma$ by
\[
F(M) = f(\lambda_1(M), \ldots, \lambda_n(M))
\]
where $\lambda_1(M), \ldots, \lambda_n(M)$ are the eigenvalues of $M$.  The function $F: \Gamma \rightarrow \mathbb{R}$ is concave and elliptic, in the sense the $F(M+P) > F(M)$ for all $P\in \Gamma_n$.  In all of our examples the operator $F$ will be defined on all of ${\rm Herm}(n)$.  Furthermore, in most (but not all examples) $\Gamma$ will be a cone in $\mathbb{R}^n$ with vertex at the origin, containing $\Gamma_n$.  For our purposes we will always think of an elliptic operator as a pair $(f, \Gamma)$ of an operator, together with an admissible cone. We pause to discuss a variety of examples.

\begin{ex}[Hessian Equations]\label{ex: hessEqPDE}  Let $\Gamma_k \subset \mathbb{R}^n$ be
\[
\Gamma_k = \{ (\lambda_1,\ldots, \lambda_n) : \sigma_{\ell}(\lambda_1, \ldots, \lambda_n) > 0 \text{ for all } 1 \leq \ell \leq k \}
\]
where $\sigma_{\ell}$ denotes the $\ell$-th symmetric function on $(\lambda_1, \ldots, \lambda_n)$.  Then it is well-known \cite{Gar, CNS} that on the cone $ \Gamma_{k}$, the function
\[
f(\lambda) = \left(\binom{n}{k}^{-1}\sigma_{k}(\lambda)\right)^{1/k}
\]
is concave, and increasing in each of its arguments.  In fact, slightly more is true \cite{Gar}; $\sigma_k^{1/k}$ is strictly concave in the following sense.  Let $M \in \Gamma_k$, and $B$ any Hermitian matrix.  Then
\[
\frac{d^2}{dt^2}\big|_{t=0}\sigma_k^{1/k}(M +tB) \leq 0
\]
with equality if and only if $B=\lambda M$ for some $\lambda \in \mathbb{R}$.
\end{ex}

\begin{ex}[Hessian Quotient Equations]\label{ex: hessQuoPDE}  One can also consider ratios of symmetric functions.  For $1\leq \ell <k \leq n$ we consider the function
\[
f(\lambda) = \left(\frac{\binom{n}{k}^{-1}\sigma_{k}(\lambda)}{\binom{n}{\ell}^{-1}\sigma_{\ell}(\lambda)}\right)^{\frac{1}{k-\ell}}
\]
This function is increasing in each of its arguments, and concave \cite{Spr}.  Note that the case $\ell=n-1, k= n$ is the operator corresponding to the $J$-equation, introduced by Donaldson \cite{Do} and Chen \cite{Chen04}.
\end{ex}

\begin{ex}[The Lagrangian phase operator]\label{ex: lagPhasePDE}
Consider the Lagrangian phase operator
\[
f(\lambda_{1},\ldots, \lambda_{n}) = \Theta(\lambda) := \sum_{i=1}^{n}\arctan(\lambda_i).
\]
This operator arises in the study of special Lagrangian submanifolds of Calabi-Yau manifolds, and the deformed Hermitian-Yang-Mills equation in mirror symmetry.  While $\Theta$ is increasing in all of its arguments on all of $\mathbb{R}^n$ it is well known that $\Theta$ is not globally concave on $\mathbb{R}^n$.  Instead, we have
\begin{enumerate}
\item $\Theta$ is concave on the set $\{(\lambda_1,\ldots, \lambda_n): \Theta(\lambda) \geq (n-1)\frac{\pi}{2} \}$ \cite{Yuan}
\item For every $\delta>0$ there is a constant $A= A(\delta)$ such that $-e^{-A\Theta}$ is concave on the set $\{(\lambda_1,\ldots, \lambda_n): \Theta(\lambda) \geq (n-2)\frac{\pi}{2}+\delta \}$ \cite{CPW}
\end{enumerate}
For $n\geq 3$, the operator $\Theta$ can fail to have even convex level sets in the range $(-(n-2)\frac{\pi}{2} +\delta, (n-2)\frac{\pi}{2} -\delta)$.
\end{ex}

\begin{ex}[The $(n-1,n-1)$ Hessian equations]\label{ex: n-1Eq}
For $(\lambda_1,\ldots, \lambda_n) \in \mathbb{R}^n$ we define a map $P: \mathbb{R}^n\rightarrow \mathbb{R}^n$, written as $P(\lambda_1,\ldots, \lambda_n) = (\mu_1,\ldots, \mu_n)$, by
\[
\mu_i= \frac{1}{n-1}\sum_{j \ne i} \lambda_j.
\]
Then we define the $(n-1,n-1)$ $k$-Hessian operator to be
\[
f(\lambda_1, \ldots, \lambda_n) = \left(\binom{n}{k}^{-1}\sigma_{k}(P(\lambda))\right)^{1/k}.
\]
This operator is clearly elliptic, and concave on the set $P^{-1}(\Gamma_k)$.
\end{ex}

An elliptic operator in the above sense gives rise to an elliptic operator on a complex manifold $(X,\omega)$ in the following way.  Given a smooth $(1,1)$ form $\alpha$ on $X$, we can associate to $\alpha$ a Hermitian endomorphism of $T^{1,0}X$ using the K\"ahler metric.  Namely, in local coordinates, define
\[
\mathcal{E}(\alpha)^{i}_{j} = \omega^{i\bar{k}}\alpha_{\bar{k}j}.
\]
Since $\mathcal{E}(\alpha)$ is Hermitian, it has $n$ real eigenvalues which we denote by $\lambda_1(\alpha), \ldots, \lambda_n(\alpha)$.  Note that, while we have suppressed the dependence, these eigenvalues depend on the choice of Hermitian metric $\omega$.  In order for our results to be broadly applicable, we make the following definition.
\begin{defn}\label{def: ansatz}
We define an {\em ansatz} to be a smooth, linear map $W: \Lambda^{1}_{\mathbb{R}}  \rightarrow \Lambda^{1,1}_{\mathbb{R}}$.  For a given ansatz $W$, and a smooth $(1,1)$ form $\alpha$ on $X$ we define the $W$-class of $\alpha$ by
\[
[\alpha]_{W} = \{ \alpha_{i\bar{j}} + \del_i\del_{\bar{j}}\phi + W(d\phi)_{i\bar{j}} : \phi \in C^{\infty}(X,\mathbb{R})\}
\]
\end{defn}

Note that, since $W$ is linear, it is easy to see that the set $[\alpha]_{W}\subset \Lambda^{1,1}(X,\mathbb{R})$ is an equivalence class; hence the terminology.  We endow the set of $W$-classes with the quotient topology induced by the $C^{4}$ norm on $\Lambda^{1,1}$.

\begin{rk}
Our primary interest is going to be the case when $W=0$, and $d\alpha=0$.  In this case $[\alpha]_W = [\alpha]_{BC}$, the Bott-Chern class of $\alpha$.
\end{rk}

\begin{defn}
Given an elliptic operator $(f, \Gamma)$ we will say that a form $\alpha$ is {\em admissible} (with respect to $\omega$) if $\lambda(\alpha) \in \Gamma$.  Furthermore, we will say that the $W$-class $[\alpha]_{W}$ is admissible if $[\alpha]_{W}$ contains an admissible representative.
\end{defn}
Note in particular that, since $\Gamma$ is open and $W$ is smooth, the set of admissible $W$-classes is open.  Concretely, this means that, for any smooth $(1,1)$ form $\beta$ the class $[\alpha+\epsilon\beta]_{W}$ is admissible for $\epsilon$ sufficiently small.  With this discussion in place, it makes sense to study the following very general elliptic problem on $(X,\omega)$.

\begin{prob}\label{prob: genEll}
Fix an elliptic operator $(f,\Gamma)$, and an ansatz $W$. For each admissible class $[\alpha]_{W}$, find a constant $c$, and a smooth, admissible $(1,1)$ form $\tilde{\alpha} \in [\alpha]_{W}$ such that
\begin{equation}\label{eq: genEll}
F_{\omega}(\tilde{\alpha}) = f(\lambda_1(\tilde{\alpha}), \ldots, \lambda_n(\tilde{\alpha})) = c
\end{equation}
\end{prob} 

\begin{defn}
Given an elliptic operator $(f, \Gamma)$ and an ansatz $W$ we will denote by ${\rm Sol}_{\omega}(f,\Gamma, W)$ the set of admissible $W$-classes admitting a solution to Problem~\ref{prob: genEll} for some constant $c$.  
\end{defn}

We are now going to prove two elementary structural results about solutions of Problem~\ref{prob: genEll}.  Our first result is that, on the set ${\rm Sol}_{\omega}(f,\Gamma, W)$, the constant $c$ appearing in the statement of Problem~\ref{prob: genEll} is unique in $[\alpha]_{W}$, and hence descends to a function
\[
 {\rm Sol}_{\omega}(f,\Gamma, W) \ni [\alpha]_{W} \longmapsto c(\omega,[\alpha]_{W}).
 \]
 The second structural result is that the set ${\rm Sol}_{\omega}(f,\Gamma, W)$ is either empty, or open in the set of $W$-classes.

\begin{lem}\label{lem: well-defined}
Fix a compact Hermitian manifold $(X,\omega)$, and a cohomology class $[\alpha]$.  We consider an ansatz $W$, and an equation $(f, \Gamma)$, and fix a $(1,1)$ form $\alpha \in [\alpha]_W$.  Assume that there are two functions $\phi_0, \phi_1$ such that
$\alpha_i := \alpha +\ddb \phi_i + W(d\phi)$ are admissible solutions of $F_{\omega}(\alpha_i)= c_i$ for $i=0,1$.  Then $c_0=c_1$.  In particular, on ${\rm Sol}_{\omega}(f,\Gamma, W)$ the function
\[
[\alpha]_W \mapsto c(\omega, [\alpha]_W)
\]
is well-defined.
\end{lem}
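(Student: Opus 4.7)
The plan is a short maximum principle argument that uses only the ellipticity (monotonicity) of $F$, not its concavity. Set $\psi := \phi_0 - \phi_1 \in C^{\infty}(X,\mathbb{R})$, so that by the linearity of the ansatz $W$ one has
\[
\alpha_0 - \alpha_1 = \ddb\psi + W(d\psi)
\]
pointwise on $X$. Since $X$ is compact, $\psi$ attains a maximum at some point $p\in X$ and a minimum at some point $q\in X$. The crucial feature of these critical points is that $d\psi$ vanishes there, so the first-order term $W(d\psi)$ --- the only part of the expression that sees the ansatz --- drops out of the inequalities that follow.

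At $p$ the complex Hessian $\ddb\psi(p)$ is negative semi-definite as a Hermitian $(1,1)$-form (standard fact: restrict $\psi$ to any complex line through $p$ and note that the resulting planar Laplacian is $\leq 0$ at the max), and $d\psi(p)=0$, so $\alpha_0(p)\leq \alpha_1(p)$ in the Loewner order on Hermitian forms. Both forms are admissible, so by Weyl's monotonicity theorem the ordered eigenvalue tuples satisfy $\lambda_k^{\uparrow}(\alpha_0)(p) \leq \lambda_k^{\uparrow}(\alpha_1)(p)$ for each $k$, and the segment joining these tuples stays in $\Gamma$ by the convexity of $\Gamma$ (implicit in the concavity hypothesis on $f$). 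Since $f$ is symmetric with $\partial f/\partial \lambda_i > 0$, integrating $f$ along that segment yields
\[
c_0 = f(\lambda(\alpha_0))(p) \leq f(\lambda(\alpha_1))(p) = c_1.
\]
The mirror argument at $q$, where $\ddb\psi(q) \geq 0$, reverses the inequality to $c_0 \geq c_1$, and the two combine to $c_0 = c_1$.

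Once this pointwise uniqueness is established, the second conclusion --- that $[\alpha]_W \mapsto c(\omega,[\alpha]_W)$ descends to a well-defined function on $\mathrm{Sol}_\omega(f,\Gamma,W)$ --- is automatic, since any two admissible solutions sitting in a common $W$-class differ by $\ddb\phi + W(d\phi)$ for some $\phi\in C^{\infty}(X,\mathbb{R})$ and therefore fit into the setup of the first part. I do not anticipate any serious obstacle; the only slightly delicate step is the passage from Loewner-order monotonicity of the Hermitian matrices to monotonicity of the scalar $f$-values, which is where the convexity of $\Gamma$ enters.
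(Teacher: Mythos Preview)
Your proof is correct and follows essentially the same route as the paper's: a maximum-principle comparison at the extrema of $\psi=\phi_0-\phi_1$, noting that $d\psi$ vanishes there so the ansatz term $W(d\psi)$ drops out, and then using the Loewner ordering $\alpha_0\lessgtr\alpha_1$ together with ellipticity to force $c_0\lessgtr c_1$. The only difference is that you spell out the passage from the Loewner order to monotonicity of $f$ via Weyl's inequality and convexity of $\Gamma$, whereas the paper simply says ``invoke the ellipticity of $f$''.
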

\begin{proof}
This is just an application of the maximum principle.  Since $X$ is compact, there are points $p_{min}$ (resp. $p_{max}$ )where $\phi_0-\phi_1$ attains its minimum (resp. maximum).  At $p_{min}$ we have
\[
d\phi_0 = d\phi_1, \qquad \ddb \phi_0 \geq \ddb \phi_1.
\]
It follows that $\alpha_0 \geq \alpha_1$.  Since $\alpha_i$ are both admissible, we can invoke the ellipticity of $f$ to obtain
\[
c_0 = F(\alpha_0) \geq F(\alpha_1) = c_1.
\]
Arguing similarly at the maximum implies $c_0=c_1$.
\end{proof}
\begin{rk}
Note that the above argument, together with the strong maximum principle, also implies uniqueness of solutions to Problem~\ref{prob: genEll} in a given $W$-class.
\end{rk}

\begin{lem}\label{lem: open}
Fix a compact complex manifold $(X,\omega)$, and an ansatz $W$. Consider an equation $(f, \Gamma)$.  Then, with notation as above, ${\rm Sol}_{\omega}(f,\Gamma, W)$ is either empty, or open.
\end{lem}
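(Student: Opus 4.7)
The plan is to apply the implicit function theorem at a given solution, which reduces openness to invertibility of the linearization, and then to bootstrap regularity. Suppose $[\alpha]_{W}\in {\rm Sol}_{\omega}(f,\Gamma,W)$, and let $\tilde{\alpha}\in[\alpha]_{W}$ be a smooth admissible representative with $F_{\omega}(\tilde{\alpha})=c_{0}$. Fix $\gamma\in(0,1)$ and write $\tilde{C}^{2,\gamma}(X)=\{\psi\in C^{2,\gamma}(X,\mathbb{R}):\int_{X}\psi\,\omega^{n}=0\}$. I consider the map
\[
T(\beta,\psi,c) \;=\; F_{\omega}\bigl(\tilde{\alpha}+\beta+\ddb\psi+W(d\psi)\bigr)-c,
\]
smooth on a neighborhood of $(0,0,c_{0})$ in the Banach space $C^{0,\gamma}(\Lambda^{1,1}_{\mathbb{R}})\times\tilde{C}^{2,\gamma}(X)\times\mathbb{R}$, with values in $C^{0,\gamma}(X)$. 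Since $T(0,0,c_{0})=0$, the plan is to apply the IFT in the $(\psi,c)$ variables at this point.

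The relevant partial derivative is $(\dot\psi,\dot c)\mapsto L\dot\psi-\dot c$, where
\[
L\dot\psi \;=\; F^{i\bar{j}}(\tilde{\alpha})\bigl[(\ddb\dot\psi)_{i\bar{j}}+W(d\dot\psi)_{i\bar{j}}\bigr]
\]
is a linear, second-order, uniformly elliptic operator with smooth coefficients and no zeroth-order term (ellipticity coming from condition (i) on $f$). The main step, which I expect to be the central obstacle, is showing that this operator is an isomorphism from $\tilde{C}^{2,\gamma}\times\mathbb{R}$ onto $C^{0,\gamma}$. Since $L(1)=0$, the strong maximum principle gives $\ker L=\mathbb{R}\cdot 1$ on $C^{2,\gamma}$, so $L$ is injective on $\tilde{C}^{2,\gamma}$; elliptic Fredholm theory then forces $\dim\,{\rm coker}\,L=1$. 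The decisive observation is that $1\notin {\rm im}\,L$: if $L\psi=1$, then at a maximum point of $\psi$ one has $d\psi=0$ and $\ddb\psi\leq 0$, whence $L\psi\leq 0$, contradicting $L\psi=1$. Hence the constants span a complement of ${\rm im}\,L$, so $(\dot\psi,\dot c)\mapsto L\dot\psi-\dot c$ is surjective; injectivity follows from the analogous maximum/minimum argument, which shows $L\dot\psi=\dot c$ forces $\dot c=0$ and then $\dot\psi=0$ by the normalization.

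The implicit function theorem now produces, for every $\beta$ sufficiently small in $C^{0,\gamma}$, unique $\psi_{\beta}\in\tilde{C}^{2,\gamma}$ and $c_{\beta}\in\mathbb{R}$ with $F_{\omega}(\tilde{\alpha}+\beta+\ddb\psi_{\beta}+W(d\psi_{\beta}))=c_{\beta}$, and continuity of the solution together with openness of $\Gamma$ keeps the perturbed form admissible. Since $C^{4}\hookrightarrow C^{0,\gamma}$, every smooth $\beta$ in a $C^{4}$-neighborhood of $0$ is covered, which is exactly the openness of ${\rm Sol}_{\omega}(f,\Gamma,W)$ in the quotient topology. The remaining task of upgrading $\psi_{\beta}$ from $C^{2,\gamma}$ to smooth for smooth $\beta$ is a standard bootstrap: Evans-Krylov for concave elliptic equations yields a $C^{2,\alpha}$ estimate for admissible solutions, and Schauder estimates applied to the differentiated equation inductively give all higher derivatives.
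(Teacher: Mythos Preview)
Your proof is correct and follows essentially the same route as the paper's: implicit function theorem, analysis of the linearized operator $L$ via the maximum principle (kernel equals the constants, and the constants complement the image so that $(\dot\psi,\dot c)\mapsto L\dot\psi-\dot c$ is an isomorphism), followed by a regularity bootstrap. The only minor difference is that the paper applies the IFT directly in $C^{k,\alpha}$ for large $k$, so that smoothness follows from Schauder alone without appealing to Evans--Krylov, whereas you work in $C^{2,\gamma}$; both choices are fine.
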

\begin{proof}
This is nothing but the implicit function theorem.  Assume that ${\rm Sol}_{\omega}(f,\Gamma, W)$ is not empty.  Let $[\alpha]_W$ admit a smooth solution $\alpha$ of the equation $F_{\omega}(\alpha) = c(\omega, [\alpha]_W)$.  Fix a $(1,1)$ form $\beta$, and consider the map
\[
\mathcal{F}(\phi, t, a) = F_{\omega}(\alpha+t\beta+\ddb \phi + W(d\phi)) -c(\omega,[\alpha]_{W})-a
\]
Since $\Gamma$ is open, there is small neighborhood $U$ of $(0,0,0) \in C^{k,\alpha}(\mathbb{R}) \times \mathbb{R} \times \mathbb{R}$ such that $F : U \rightarrow C^{k-2,\alpha}(\mathbb{R})$. Let
\[
F^{i\bar{j}} = \frac{\del F}{\del m_{i\bar{j}}}
\]
denote the derivative of $F$ at $\alpha$, and note that, since $F$ is elliptic, $F^{i\bar{j}}$ defines a Hermitian metric on $X$.  Let $\Delta_{F} = F^{i\bar{j}}\del_i\del_{\bar{j}}$ denote the associated Laplacian.  The operator
\[
v \mapsto Lv := \Delta_{F}v + F^{i\bar{j}}W_{i\bar{j}}(dv)
\]
is homotopic to $\Delta_{\omega}$ and therefore has index zero, regarded as a map from $C^{k,\alpha} \rightarrow C^{k-2,\alpha}$.  By the maximum principle, the kernel of $L$ consists of the constant functions.  Thus, the cokernel of $L$ has dimension $1$.  Another application of the maximum principle shows that the cokernel of $L$ contains the constants.  It follows that the map
\[
C^{k,\alpha} \times \mathbb{R} \ni (v,a) \longmapsto Lv -a \in C^{k-2,\alpha}
\]
is surjective.  On the other hand, the linearization of $\mathcal{F}$ is
\[
D\mathcal{F}(v, \beta, a) = Lv + F^{i\bar{j}}\beta_{i\bar{j}} -a.
\]
Thus, by the implicit function theorem, for all $t$ sufficiently small we can find a function $\phi_{t}\in C^{k,\alpha}$ and a constant $a_{t} \in \mathbb{R}$ such that
\[
F(\alpha+t\beta + \ddb\phi_{t}) = c(\omega, [\alpha]) + a_{t}.
\]
Fixing $k\gg 2$, the Schauder theory implies that $\phi_t \in C^{\infty}(X,\mathbb{R})$, and the result follows.
\end{proof}

We will now give the proof of our main theorem, and then spend the remainder of the paper discussing applications.   Suppose $(X,\omega)$ is a compact complex manifold with a Hermitian metric, and fix a concave elliptic operator $(f, \Gamma)$, and an ansatz $W$. Suppose that  ${\rm Sol}_{\omega}(f,\Gamma,  W)$ is not empty, and hence by Lemma~\ref{lem: open} it is open.  By Lemma~\ref{lem: well-defined} we get a function
\[
{\rm Sol}_{\omega}(f,\Gamma, W) \ni [\alpha]_W \longrightarrow c(\omega, [\alpha]_W).
\]
\begin{thm}\label{thm: main}
With notation as above, the function
\[
{\rm Sol}_{\omega}(f,\Gamma, W) \ni [\alpha]_W \longrightarrow c(\omega, [\alpha]_{W})
\]
is concave with respect to the natural linear structure on real $(1,1)$ forms.  Furthermore, in case $W\equiv 0$, assume that there is a smooth $(1,1)$ form $\beta$ for which we have $\frac{d^2}{dt^2}\big|_{t=0}c(\omega, [\alpha+t\beta]_{0}) =0$.  If $\alpha$ denotes the solution of $F(\alpha)=c(\omega, [\alpha]_{0})$, then there is a function $\psi$ so that
\[
D^{2}F(\alpha) (\beta+\ddb \psi, \beta+\ddb \psi) \equiv0
\]
where we regard $D^{2}F(\alpha)$ as a negative semi-definite bilinear form on $\Lambda^{1,1}$.
\end{thm}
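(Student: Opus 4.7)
The plan is to reduce the global concavity assertion to the pointwise concavity of $F$ on $\Gamma \subset {\rm Herm}(n)$, following the philosophy of Demailly's argument recalled in the introduction, and then handle the rigidity part by a second-variation computation combined with the strong maximum principle. Both halves will rely only on the maximum principle, the convexity of $\Gamma$, and the implicit-function-theorem machinery already developed in Lemmas~\ref{lem: well-defined} and~\ref{lem: open}.

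For concavity, I would fix two classes $[\alpha_0]_W, [\alpha_1]_W \in {\rm Sol}_{\omega}(f,\Gamma,W)$ with genuine solutions $\alpha_i$ satisfying $F_{\omega}(\alpha_i) = c_i$, and consider any convex combination class $[(1-t)\alpha_0 + t\alpha_1]_W$ that also lies in ${\rm Sol}_{\omega}$. By linearity of $W$ the form $\tilde\alpha_t := (1-t)\alpha_0 + t\alpha_1$ represents this class, and convexity of $\Gamma$ together with pointwise concavity of $F$ on $\Gamma$ yields $F_{\omega}(\tilde\alpha_t) \geq (1-t)c_0 + tc_1$ at every point of $X$. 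Writing the genuine solution in the combined class as $\hat\alpha_t = \tilde\alpha_t + \ddb\phi_t + W(d\phi_t)$ with $F_{\omega}(\hat\alpha_t)\equiv c_t$, I then compare $\hat\alpha_t$ with $\tilde\alpha_t$ at a minimum point $p_{\min}$ of $\phi_t$: there $d\phi_t = 0$ and $\ddb\phi_t \geq 0$, so $\hat\alpha_t(p_{\min}) \geq \tilde\alpha_t(p_{\min})$, and ellipticity gives
\[
c_t = F_{\omega}(\hat\alpha_t)(p_{\min}) \geq F_{\omega}(\tilde\alpha_t)(p_{\min}) \geq (1-t)c_0 + tc_1,
\]
which is exactly the required inequality. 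A pleasant feature of the argument is that at $p_{\min}$ the first-order term $W(d\phi_t)$ vanishes automatically, so no hypothesis on the torsion of $\omega$ is needed.

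For the rigidity assertion with $W\equiv 0$, I would use Lemma~\ref{lem: open} together with Schauder theory and the uniqueness from Lemma~\ref{lem: well-defined} to produce a smooth one-parameter family of solutions $\tilde\alpha_t = \alpha + t\beta + \ddb\phi_t$ with $F_{\omega}(\tilde\alpha_t) = c(t) := c(\omega,[\alpha+t\beta]_0)$. Differentiating this identity twice in $t$ at $t=0$ gives
\[
D^{2}F(\alpha)(\beta + \ddb\dot\phi_0,\, \beta + \ddb\dot\phi_0) + \Delta_F \ddot\phi_0 = c''(0) = 0,
\]
where $\Delta_F = F^{i\bar{j}}(\alpha)\del_i\del_{\bar{j}}$. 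Since $D^{2}F(\alpha)$ is negative semi-definite by concavity of $F$, the first summand is non-positive and hence $\Delta_F \ddot\phi_0 \geq 0$; the strong maximum principle on compact $X$ forces $\ddot\phi_0$ to be constant, so $\Delta_F \ddot\phi_0 \equiv 0$, and the first term must therefore also vanish identically. Taking $\psi = \dot\phi_0$ yields the stated identity. The main technical obstacle is the smooth dependence of $\phi_t$ on the parameter $t$ to second order, which I would establish by iterating the implicit function theorem used in Lemma~\ref{lem: open} in a decreasing scale of Hölder spaces, combined with Schauder estimates for the parametrized family of linearized operators.
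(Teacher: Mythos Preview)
Your proposal is correct, but it takes a route that differs from the paper's in both halves.

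For the concavity, the paper does \emph{not} argue by direct comparison of the convex combination $(1-t)\alpha_0 + t\alpha_1$ with the genuine solution. Instead it fixes a single $[\alpha]_W$, builds the one-parameter family $\alpha + t\beta + \sqrt{-1}\del\dbar\phi_t + W(d\phi_t)$ via the implicit function theorem, differentiates twice in $t$, and evaluates the resulting inequality $\ddot c(0) \le F^{i\bar j}\del_i\del_{\bar j}\ddot\phi + F^{i\bar j}W_{i\bar j}(d\ddot\phi)$ at the maximum of $\ddot\phi$. Your argument is more elementary and closer in spirit to Demailly's original proof recalled in the introduction: it bypasses the differentiation and the smoothness of $t\mapsto\phi_t$ entirely. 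The cost is that you need $\Gamma$ to be convex (so that $(1-t)\alpha_0 + t\alpha_1$ is admissible and the global inequality $F((1-t)M_0 + tM_1)\geq (1-t)F(M_0)+tF(M_1)$ holds), whereas the paper's proof uses only the pointwise Hessian condition $D^2F\leq 0$. In every example in the paper $\Gamma$ is convex, so this is a mild restriction, but worth flagging.

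For the rigidity, you arrive at the same identity $D^2F(\alpha)(\beta+\sqrt{-1}\del\dbar\dot\phi_0,\beta+\sqrt{-1}\del\dbar\dot\phi_0) + \Delta_F\ddot\phi_0 = 0$ as the paper, but you finish with the strong maximum principle ($\Delta_F\ddot\phi_0\geq 0$ on compact $X$ forces $\ddot\phi_0$ constant). The paper instead invokes Gauduchon's theorem to conformally rescale $\eta_F$ to a Gauduchon metric, multiplies by the corresponding volume form and integrates; the Laplacian term integrates to zero by parts, so $\ddot c(0)=0$ forces the $D^2F$ term to vanish identically. Your route avoids Gauduchon's theorem altogether and is cleaner for this statement; the paper's integration argument, however, simultaneously yields the first-variation formula used in Remark~\ref{rk: obst} to extract algebraic obstructions, which your approach does not immediately give.
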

\begin{proof}
Before beginning, we recall a result of Gauduchon \cite{Gaud}.  Let $h_{i\bar{j}}$ be any Hermitian metric on $X$, and let $\eta$ be the associated $(1,1)$ form.  Then there is a function $\sigma: X\rightarrow \mathbb{R}$ so that
\[
\ddb(e^{\sigma }\eta)^{n-1}=0
\]
That is, we can conformally rescale $h$ to a Gauduchon metric.  Fix $[\alpha]_{W}\in {\rm Sol}_{\omega}(f,\Gamma, W)$, and let $\alpha$ denote the solution of $F_{\omega}(\alpha)= c(\omega, [\alpha]_W)$.  Let $\beta$ be a smooth $(1,1)$ form.  By Lemma~\ref{lem: open}, for all $t$ sufficiently small there is a smooth function $\phi_t$ so that
\[
F_{\omega}(\alpha + t\beta + \ddb \phi_t + W(d\phi_t))= c(\omega [\alpha+t\beta]_{W})= c(t)
\]
Differentiating this equation in $t$ yields
\begin{equation}\label{eq: mainComp}
\begin{aligned}
\dot{c} &= F^{i\bar{j}}\left(\beta_{i\bar{j}} + \del_i\del_{\bar{j}}\dot{\phi} + W_{i\bar{j}}(d\dot{\phi}_{t})\right)\\
\ddot{c}&= F^{i\bar{j}}\del_i\del_{\bar{j}}\ddot{\phi} + F^{i\bar{j}}W_{i\bar{j}}(d\ddot{\phi}_{t})\\
&\quad + F^{i\bar{j},k\bar{\ell}}\left(\beta_{i\bar{j}} + \del_i\del_{\bar{j}}\dot{\phi} + W_{i\bar{j}}(d\dot{\phi}_t)\right)(\beta_{k\bar{\ell}} + \del_k\del_{\bar{\ell}}\dot{\phi} +W_{k\bar{\ell}}(d\dot{\phi}_t) ).
\end{aligned}
\end{equation}
Where $F^{i\bar{j},k\bar{\ell}}$ denotes the hessian of $F$.  Evaluating at $t=0$ and using the concavity of $F$ on $\Gamma$, we have
\begin{equation}\label{eq: concave}
\ddot{c}(0) \leq F^{i\bar{j}}\del_i\del_{\bar{j}}\ddot{\phi}+ F^{i\bar{j}}W_{i\bar{j}}(d\ddot{\phi}_{t}).
\end{equation}
Crucially, this equation holds at every point of $X$.  In particular, it holds at the maximum point of $\ddot{\phi}$.  Applying the maximum principle, we obtain
\[
\ddot{c}(0)\leq0
\]
which is the desired result.

Let us give another proof, which applies only in the case that $W=0$, but which yields a slightly stronger result. Suppose that $W=0$.  By the ellipticity of $F$, $F^{i\bar{j}}$ induces a Hermitian metric $h_{F}$ on $X$, with associated $(1,1)$ form $\eta_F$.  Hence we can apply Gauduchon's theorem to find $\sigma$ so that $e^{\sigma}\eta_{F}$ is Gauduchon.  Furthermore, after possibly adding a constant to $\sigma$, we can assume that $\int_{X}e^{(n-1)\sigma}\eta_{F}^n =1$.  Write 
\[
F^{i\bar{j}}\del_i\del_{\bar{j}}\ddot{\phi} = n\frac{\eta_{F}^{n-1}\wedge \ddb \ddot{\phi}}{\eta_{F}^{n}}.
\]
Multiply both sides of~\eqref{eq: concave} by $e^{(n-1)\sigma} \eta_{F}^{n}$ and integrate over $X$ to get
\[
\ddot{c}(0) = \ddot{c}(0)\int_{X}e^{(n-1)\sigma} \eta_{F}^{n} \leq n\int_{X} \ddb\ddot{\phi} \wedge (e^{\sigma}\eta_{F})^{n-1}.
\]
Integrating by parts, and using that $e^{\sigma}\eta_{F}$ is Gauduchon, we obtain $\ddot{c}(0)\leq 0$ which proves the result.  Note, that if
\[
\ddot{c}(0) =0
\]
then we must have equality throughout.  In particular, we have that
\[
F^{i\bar{j},k\bar{\ell}}\left(\beta_{i\bar{j}} + \del_i\del_{\bar{j}}\dot{\phi}\right)\left(\beta_{k\bar{\ell}} + \del_k\del_{\bar{\ell}}\dot{\phi}\right)\equiv 0
\]
and hence $\beta+\ddb \dot{\phi}$ lies in the kernel of the negative semi-definite bilinear form $D^2F : \Lambda^{1,1}\times\Lambda^{1,1} \rightarrow \mathbb{C}$.
\end{proof}

\begin{rk}\label{rk: obst}
The proof implies a type of ``ellipticity" for the function $c$ on the solvable set when $W=0$.  Namely, assuming $W=0$, suppose that there is a $L^{1}$ function $\psi$ so that $\beta+\ddb \psi \geq0$ in the sense of currents.  Then, using~\eqref{eq: mainComp} and the above notation we have
\[
\begin{aligned}
\dot{c}(0) = \dot{c}(0)\int_{X} e^{(n-1)\sigma} \eta_{F}^{n} &= n\int_{X} (\beta+\ddb\dot{\phi}) \wedge (e^{\sigma}\eta_{F})^{n-1}\\
&=  n\int_{X} \beta \wedge (e^{\sigma}\eta_{F})^{n-1}\\
&= n\int_{X} (\beta+\ddb\psi) \wedge (e^{\sigma}\eta_{F})^{n-1}\\
&\geq 0
\end{aligned}
\]
where in the third and fourth inequalities we used that $e^{\sigma}\eta_{F}$ is Gauduchon.  For example, when $[\beta]_0$ is the Bott-Chern class of an effective divisor, the inequality
\[
\frac{d}{dt} \big|_{t=0} c(\omega, [a+t\beta]_{0}) \geq 0
\]
can be regarded as a sort of algebraic obstruction for the existence of solutions to Problem~\ref{prob: genEll}.  In all the examples we have considered, the obstructions produced via this observation are clear ``by inspection".  Nevertheless, we hope this observation may be of some use in other settings.
\end{rk}

From Theorem~\ref{thm: main} we obtain generalized Khovanskii-Teissier type inequalities of the following form;  fix a complex manifold $(X,\omega)$ and an ansatz $W$, then for each $[\alpha]_{W}\in {\rm Solv}_{\omega}(f,\Gamma, W)$, and every $\beta\in\Lambda^{1,1}$ we have
\[
\frac{d^2}{dt^2}c(\omega, [\alpha]_{W}+t[\beta]_{W}) \leq 0.
\] 
The most useful applications of this result come from situations in which the ansatz $W=0$, and the constant $c(\omega, [\alpha]_W)$, as well as the solvability of the equation $(f, \Gamma, W)$, depend only on the algebraic structure of $X$.  That is, in situations where $c(\omega, [\alpha]_{W}) = c([\omega], [\alpha]_{W})$, and ${\rm Sol}_{\omega}(f,\Gamma,W)$ depends only on $[\omega], [\alpha]_{W}$, and can be determined by the algebraic properties of $X$, such as the intersection pairing on cohomology, and Poincar\'e duality. This is the case for the Monge-Amp\`ere equation, thanks to Yau's theorem \cite{Y}, which says that on a compact K\"ahler manifold $X$, with $W$=0, we have
\[
{\rm Sol}_{\omega}(\sigma_{n}^{1/n}, \Gamma_n)\cap H^{1,1}(X,\mathbb{R}) = \mathcal{K}, 
\]
and on this set we have
\[
c(\omega, [\alpha]) = \left(\frac{\int_{X}\alpha^n}{\int_{X}\omega^n}\right)^{\frac{1}{n}}.
\]
Furthermore, by the result of Demailly-P\u{a}un \cite{DP}, the K\"ahler cone $\mathcal{K}$ can be determined purely algebraically.  All together, these results imply restrictions on the intersection theory of a general K\"ahler manifold.

Even in the simplest case, when the ansatz $W=0$, producing algebraic conditions determining the solvability of an equation $(f,\Gamma)$ is a hard problem, and the subject of current research \cite{LS, CS, Sze, CY}.  On the other hand, we can at least attempt to give a structural condition under which the constant $c(\omega, [\alpha])$ can be shown to depend only on $[\omega], [\alpha]$ (and $\Gamma$!).  The remainder of this paper will be spent discussing explicit examples of equations to which Theorem~\ref{thm: main} applies. This will motivate the introduction of the structural condition alluded to above, which appears at the end of the paper.  We will use the following notation; for $(1,1)$ forms $\alpha_i, 1\leq i\leq n$ we denote
\[
\alpha_1.\alpha_2\cdots\alpha_{n-1}.\alpha_n= \int_{X}\alpha_1\wedge\alpha_2\cdots \wedge\alpha_{n-1}\wedge\alpha_n
\]
\begin{ex}[Hessian Equations]\label{ex: KahlerHess}
Let $(X,\omega)$ be a compact K\"ahler manifold, and $\alpha$ be a closed, real $(1,1)$ form on $X$, and $W=0$.  Then an exercise in linear algebra shows that
\begin{equation}\label{eq: hessConst}
\sigma_{k}(\lambda(\alpha)) = \binom{n}{k} \frac{\omega^{n-k}\wedge\alpha^k}{\omega^n}.
\end{equation}
If $(\binom{n}{k}^{-1}\sigma_{k}(\lambda(\alpha))^{\frac{1}{k}} =c$, and $\alpha$ is $k$-positive then, multiplying~\eqref{eq: hessConst} by $\omega^{n}$ and integrating over $X$ we must have
\[
c^{k} =\frac{\int_{X} \omega^{n-k}\wedge \alpha^{k}}{\int_{X}\omega^n}.
\]
When $k$ is odd, $c$ is uniquely specified, and when $k$ is even, $c$ is the positive root of the cohomological quantity appearing above. In particular, given that $\alpha$ is $k$-positive, there is unique constant $c$, determined only by $[\alpha],[\omega] \in H^{1,1}(X,\mathbb{R})$, for which Problem~\ref{prob: genEll} can possibly admit a solution.  Furthermore, by work of Dinew-Kolodziej \cite{DK} (see also Sz\'ekelyhidi \cite{Sze}), if $[\alpha]$ admits a $k$-positive representative, then the complex Hessian equation $(\binom{n}{k}^{-1}\sigma_{k}(\lambda(\alpha))^{\frac{1}{k}} =c$ admits a smooth solution in the class $[\alpha]$.  Thus, we conclude
\begin{cor}\label{cor: Hess}
Let $\mathcal{K}_{\Gamma_k, \omega}\subset H^{1,1}(X,\mathbb{R})$ denote the cone of classes admitting $k$-positive representatives with respect to $\omega$.  Then the function
\[
H^{1,1}(X,\mathbb{R}) \ni [\alpha] \longmapsto \left( \frac{\omega^{n-k}.\alpha^k}{\omega^n}\right)^{\frac{1}{k}}
\]
is concave on $\mathcal{K}_{\Gamma_k, \omega}$.  In particular, for any $[\beta] \in H^{1,1}(X,\mathbb{R})$ we have the generalized Khovanskii-Teissier type inequality
\begin{equation}\label{eq: KTHess}
\left(\omega^{n-k}.\alpha^{k-2}.\beta^2\right)\left(\omega^{n-k}.\alpha^k\right) \leq \left(\alpha^{k-1}.\beta.\omega^{n-k}\right)^2,
\end{equation}
with equality if and only if $[\beta] = \lambda[\alpha]$ for some $\lambda \in \mathbb{R}$.
\end{cor}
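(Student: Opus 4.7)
The plan is to apply Theorem~\ref{thm: main} directly with $(f,\Gamma) = ((\binom{n}{k}^{-1}\sigma_k)^{1/k}, \Gamma_k)$ and ansatz $W\equiv 0$. First I would invoke the result of Dinew--Kolodziej \cite{DK} (cf.~\cite{Sze}) to assert that every $k$-positive class lies in ${\rm Sol}_\omega(f,\Gamma_k, 0)$, i.e.\ $\mathcal{K}_{\Gamma_k,\omega} \subset {\rm Sol}_\omega(f,\Gamma_k, 0)$. To pin down the constant $c(\omega, [\alpha])$ produced by Lemma~\ref{lem: well-defined}, I would integrate the pointwise identity~\eqref{eq: hessConst} against $\omega^n$: if $\alpha \in [\alpha]$ solves $f(\lambda(\alpha)) = c$, then $c^k = \omega^{n-k}.\alpha^k / \omega^n$, so $c(\omega,[\alpha]) = (\omega^{n-k}.\alpha^k / \omega^n)^{1/k}$, depending only on the cohomology classes of $\alpha$ and $\omega$. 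Theorem~\ref{thm: main} then yields concavity of this expression on $\mathcal{K}_{\Gamma_k,\omega}$.

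Next I would derive the stated inequality by elementary differentiation. Setting $h(t) := \omega^{n-k}.(\alpha + t\beta)^k$, multilinearity gives $h(0) = \omega^{n-k}.\alpha^k$, $h'(0) = k\,\omega^{n-k}.\alpha^{k-1}.\beta$, and $h''(0) = k(k-1)\,\omega^{n-k}.\alpha^{k-2}.\beta^2$. Writing $c(t) = (h(t)/\omega^n)^{1/k}$ and imposing $c''(0) \leq 0$, a short calculation produces $k\, h(0)\, h''(0) \leq (k-1)\, h'(0)^2$, which after dividing through by $k^2(k-1)$ is exactly the inequality in the corollary.

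For the equality case I would combine the second half of Theorem~\ref{thm: main} with the strict concavity recorded at the end of Example~\ref{ex: hessEqPDE}. Equality in~\eqref{eq: KTHess} is equivalent to $c''(0) = 0$, so by Theorem~\ref{thm: main} there is a function $\psi$ with $D^2 F(\alpha)(\beta + \ddb\psi, \beta + \ddb\psi) \equiv 0$ at every point of $X$. The strict concavity of $\sigma_k^{1/k}$ on $\Gamma_k$ then forces $\beta + \ddb\psi = \Lambda(x)\, \alpha$ as $(1,1)$-forms, for some real-valued function $\Lambda$ on $X$. To upgrade $\Lambda$ to a constant I would invoke the linearization appearing in~\eqref{eq: mainComp}, namely $F^{i\bar j}(\beta + \ddb\psi)_{i\bar j} = \dot c$, together with Euler's relation $F^{i\bar j}\alpha_{i\bar j} = F(\alpha) = c$ (since $f$ is $1$-homogeneous). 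These combine to $\Lambda(x)\cdot c = \dot c = \textrm{const}$, and since $c > 0$ on $\mathcal{K}_{\Gamma_k,\omega}$ we conclude $\Lambda$ is a constant $\lambda \in \mathbb{R}$, whence $[\beta] = \lambda [\alpha]$. The converse is immediate: if $[\beta] = \lambda[\alpha]$, then $\omega^{n-k}.\alpha^{k-1}.\beta = \lambda\, \omega^{n-k}.\alpha^k$ and $\omega^{n-k}.\alpha^{k-2}.\beta^2 = \lambda^2\, \omega^{n-k}.\alpha^k$, so both sides of~\eqref{eq: KTHess} coincide.

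The main obstacle I anticipate is the equality analysis, which requires synthesizing three separate ingredients (the refined conclusion of Theorem~\ref{thm: main}, the strict pointwise concavity of $\sigma_k^{1/k}$ from Example~\ref{ex: hessEqPDE}, and the $1$-homogeneity of $F$) in order to promote the pointwise proportionality factor $\Lambda(x)$ to a genuine constant. The concavity assertion and the derivation of~\eqref{eq: KTHess} are essentially mechanical once Theorem~\ref{thm: main} and \cite{DK} are in hand.
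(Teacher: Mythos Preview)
Your proposal is correct and follows the paper's intended route: apply Theorem~\ref{thm: main} together with the Dinew--Ko\l odziej existence result to get concavity of $c(\omega,[\alpha]) = (\omega^{n-k}.\alpha^k/\omega^n)^{1/k}$ on $\mathcal{K}_{\Gamma_k,\omega}$, and then read off~\eqref{eq: KTHess} as the condition $c''(0)\le 0$. The paper itself does not spell out a proof of the corollary beyond the surrounding discussion in Example~\ref{ex: KahlerHess}; in particular it asserts the equality characterization without giving the argument, so your treatment of that case actually supplies detail the paper omits. Your use of the $1$-homogeneity of $\sigma_k^{1/k}$ (Euler's identity $F^{i\bar j}\alpha_{i\bar j}=F(\alpha)=c$) combined with the first variation formula in~\eqref{eq: mainComp} to promote the pointwise proportionality $\beta+\sqrt{-1}\partial\bar\partial\psi=\Lambda(x)\alpha$ to a constant $\Lambda$ is a clean device and is exactly the kind of step one needs here, since the strict concavity recorded in Example~\ref{ex: hessEqPDE} only gives the pointwise conclusion. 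The one implicit identification you are making is that the $\psi$ produced by the equality clause of Theorem~\ref{thm: main} is the same $\dot\phi$ appearing in~\eqref{eq: mainComp}; this is clear from the proof of Theorem~\ref{thm: main}, but worth stating explicitly.
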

\begin{rk}
In fact, it is not hard to see that the concavity, and hence the generalized Khovanskii-Teissier type inequality, hold on the a priori larger set of classes $[\alpha]$ admitting a $k$-positive representative with respect to {\em some} K\"ahler metric in the class $[\omega]$.
\end{rk}

Corollary~\ref{cor: Hess} was known to Tosatti-Weinkove \cite{T} who observed that it followed from Dinew-Ko\l odziej's work \cite{DK}, and Demailly's technique \cite{Dem}.  It was also proved independently by Xiao \cite{Xia1}, who also proves polarized versions of~\eqref{eq: KTHess}. 

\begin{rk}
Applying the discussion of Remark~\ref{rk: obst} to the class of an effective divisor $[E]$ on $X$ yields that, for any class $[\alpha]$ admitting a $k$-positive representative with respect to $\omega$, we have
\[
\int_{E}\omega^{n-k}\wedge\alpha^{k-1} \geq 0.
\]
Note that this result, with strict inequality, follows from linear algebra.
\end{rk}

\end{ex}

\begin{ex}[Hessian Quotient Equations]\label{ex: hessQuoAlg}
Let $(X,\omega)$ be a compact K\"ahler manifold, and $\alpha$ be a closed, real $(1,1)$ form on $X$, and $W=0$. The considerations of Example~\ref{ex: KahlerHess} hold for the Hessian quotient equations of Example~\ref{ex: hessQuoPDE}.  Using~\eqref{eq: hessConst} we obtain
\[
c = \left(\frac{\int_{X}\omega^{n-k}\wedge\alpha^{k}}{\int_{X}\omega^{n-\ell}\wedge\alpha^{\ell}}\right)^{\frac{1}{k-\ell}}
\]
and hence there is a unique constant, depending only on $[\alpha], [\omega] \in H^{1,1}(X,\mathbb{R})$ such that the Hessian quotient equation on $(X,\omega)$, in the $k$-positive class $[\alpha]$ can admit a solution.  Applying Theorem~\ref{thm: main} we obtain
\begin{cor}\label{cor: hessQuo}
Let ${\rm Sol}_{\omega}(\frac{k}{\ell}, \Gamma_k) \subset \mathcal{K}_{\Gamma_k, \omega}$ denote the set of classes $[\alpha]$ admitting a $k$-positive representative solving the Hessian quotient equation
\[
\left(\frac{\binom{n}{k}^{-1}\sigma_{k}(\alpha)}{\binom{n}{\ell}^{-1}\sigma_{\ell}(\alpha)}\right)^{1/(k-\ell)} = c.
\]
Then on ${\rm Sol}_{\omega}(\frac{k}{\ell}, \Gamma_k)$, the function
\[
H^{1,1}(X,\mathbb{R}) \ni [\alpha] \longmapsto \left(\frac{\omega^{n-k}.\alpha^{k}}{\omega^{n-\ell}.\alpha^{\ell}}\right)^{\frac{1}{k-\ell}}
\]
is concave.
\end{cor}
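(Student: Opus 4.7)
The plan is to reduce this Corollary directly to Theorem~\ref{thm: main} applied to the Hessian quotient operator of Example~\ref{ex: hessQuoPDE} with ansatz $W=0$. That operator is concave and increasing on $\Gamma_k$, and the hypothesis of the Corollary is precisely that ${\rm Sol}_{\omega}(\tfrac{k}{\ell},\Gamma_k)$ is non-empty, so Theorem~\ref{thm: main} immediately yields concavity of the intrinsic function $[\alpha]\mapsto c(\omega,[\alpha])$ on this set. The only remaining task is to identify $c(\omega,[\alpha])$ with the cohomological expression in the statement; this is a direct consequence of the pointwise identity~\eqref{eq: hessConst} used in Example~\ref{ex: KahlerHess}.

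For the identification, suppose $\tilde\alpha\in[\alpha]$ is a $k$-positive solution of $F_\omega(\tilde\alpha)=c$. Rearranging the equation gives the pointwise identity
\[
\sigma_k(\lambda(\tilde\alpha)) \;=\; c^{k-\ell}\,\frac{\binom{n}{k}}{\binom{n}{\ell}}\,\sigma_\ell(\lambda(\tilde\alpha))
\]
on $X$. Substituting~\eqref{eq: hessConst} into both sides and cancelling $\binom{n}{k}/\omega^n$ yields
\[
\omega^{n-k}\wedge\tilde\alpha^k \;=\; c^{k-\ell}\,\omega^{n-\ell}\wedge\tilde\alpha^\ell
\]
pointwise. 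Integrating over $X$ and invoking that $\tilde\alpha$ is cohomologous to $\alpha$ gives
\[
c^{k-\ell} \;=\; \frac{\omega^{n-k}.\alpha^k}{\omega^{n-\ell}.\alpha^\ell}.
\]
Since $\tilde\alpha$ is $k$-positive we have $\sigma_\ell(\tilde\alpha)>0$ and $\sigma_k(\tilde\alpha)>0$, so $c>0$ and the positive $(k-\ell)$-th root picks out a unique value depending only on $[\omega]$ and $[\alpha]$. Hence $c(\omega,[\alpha])$ coincides with the cohomological expression appearing in the Corollary.

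With $c$ so identified, Theorem~\ref{thm: main} finishes the proof: the function
\[
[\alpha]\longmapsto c(\omega,[\alpha]) \;=\; \left(\frac{\omega^{n-k}.\alpha^k}{\omega^{n-\ell}.\alpha^\ell}\right)^{\tfrac{1}{k-\ell}}
\]
is concave on ${\rm Sol}_\omega(\tfrac{k}{\ell},\Gamma_k)$, which is exactly the claim. I do not anticipate any serious obstacle; the only delicate point is the translation from the analytic normalization of the PDE to the cohomological formula, and this is handled cleanly by the $k$-positivity of admissible representatives, which makes both the ratio and the extraction of the positive $(k-\ell)$-th root unambiguous. (The cohomological invariance of $c$ in the $k$-even case is the main place where one could worry about sign ambiguity, but $k$-positivity rules this out.)
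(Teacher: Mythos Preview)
Your proposal is correct and follows essentially the same route as the paper: identify the constant $c$ with the cohomological ratio via~\eqref{eq: hessConst} and then invoke Theorem~\ref{thm: main} for the concave elliptic operator of Example~\ref{ex: hessQuoPDE} with $W=0$. Your write-up merely spells out the integration step and the positivity of the $(k-\ell)$-th root in more detail than the paper does, but the argument is the same.
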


The concavity implies a Khovanskii-Teissier type inequality, which is not difficult to compute, but rather long to write down.  For simplicity, we make the following observation; if $g$ is a positive, concave function, and $a\geq 1$, then $g^{-a}$ is convex.  We conclude that
\[
H^{1,1}(X,\mathbb{R}) \ni [\alpha] \longmapsto \frac{\omega^{n-\ell}.\alpha^{\ell}}{\omega^{n-k}.\alpha^{k}}
\]
is convex on ${\rm Solv}_{\omega}(\frac{k}{\ell}, \Gamma_k)$; in particular, for every  $[\beta] \in H^{1,1}(X,\mathbb{R})$ we have
\[
\begin{aligned}
&\ell(\ell-1)(\omega^{n-\ell}.\alpha^{\ell-2}.\beta^2)(\omega^{n-k}.\alpha^k)-k(k-1)(\omega^{n-k}.\alpha^{k-2}.\beta^2)(\omega^{n-\ell}.\alpha^{\ell})\\
& \geq 2k\left(\ell (\omega^{n-\ell}.\alpha^{\ell-1}.\beta) - k(\omega^{n-k}.\alpha^{k-1}.\beta)\frac{(\omega^{n-\ell}.\alpha^{\ell})}{(\omega^{n-k}.\alpha^k)}\right)(\omega^{n-k}.\alpha^{k-1}.\beta)
\end{aligned}
\]

\begin{rk}
Sz\'ekelyhidi \cite{Sze}, and Lejmi-Sz\'ekelyhidi \cite{LS}  have made conjectures concerning necessary and sufficiently algebraic conditions for the Hessian quotient equations to be solvable.  For effective divisors, the obstructions appearing in \cite{Sze, LS} are precisely those obtained from Remark~\ref{rk: obst}.  When $k=n$ and $\ell=n-1$, and $(X,\omega)$ is toric, this conjecture has been proved by the author and Sz\'ekelyhidi \cite{CS}.  Combining this work with Corollary~\ref{cor: hessQuo} we obtain constraints on the intersection theory of toric varieties. 
\end{rk} 
\end{ex}

\begin{ex}[Lagrangian Phase Operator]\label{ex: LagPhaseAG}
Again we consider $(X,\omega)$ compact K\"ahler manifold, $\alpha$ a closed, and real $(1,1)$ form on $X$, and $W=0$.  The Lagrangian phase operator considered in Example~\ref{ex: lagPhasePDE} satisfies a slightly more complicated version of the principle discussed in the previous two examples.  Consider the volume form
\[
\Omega(\omega, \alpha) = \sqrt{\left(\prod_{i=1}^{n} 1+\lambda_i(\alpha)^2\right)} \omega^{n}.
\]
It is straightforward to show that
\[
-(-\sqrt{-1})^{n}\int_{X}e^{\sqrt{-1}\Theta_{\omega}(\alpha)}\Omega(\omega, \alpha) = -\int_{X}e^{(\sqrt{-1}\omega+\alpha)}.
\]
For simplicity, let us denote $Z([\alpha], [\omega]) =  -\int_{X}e^{(\sqrt{-1}\omega+\alpha)}$.  If $\Theta_{\omega}(\alpha) = c$ on $X$, then we clearly have
\[
-\int_{X} (\sqrt{-1}\omega+\alpha)^{n} \in \mathbb{R}_{>0} e^{\sqrt{-1}c- (n-2)\frac{\pi}{2}}
\]
and so, in particular, the constant $c$ appearing on the right-hand side of the Lagrangian phase equation is determined modulo $2\pi$ by the classes $[\alpha], [\omega] \in H^{1,1}(X,\mathbb{R})$ on the solvable set. Furthermore, if $\alpha$ solves the Lagrangian phase equation with $c\geq (n-2)\frac{\pi}{2}$, then we have
\[
c = (n-2)\frac{\pi}{2} + {\rm Arg}_{p.v}(Z([\alpha],[\omega]))
\]
where ${\rm Arg}_{p.v.}$ denotes the principal value of argument. For each angle $c\in (-n\frac{\pi}{2}, n\frac{\pi}{2})$, let 
\[
\Gamma_{\geq c} = \{ (\lambda_1, \ldots, \lambda_n) \in \mathbb{R}^{n} : \sum_{i=1}^{n}\arctan(\lambda_i) \geq c \}.
\]
With this discussion, Theorem~\ref{thm: main} implies
\begin{cor}
Let ${\rm Sol}_{\omega}(\Theta, \Gamma_{(n-1)\frac{\pi}{2}}) \subset \mathcal{K}_{\Gamma_{(n-1)\frac{\pi}{2}}, \omega}$ denote the set of classes $[\alpha] \in H^{1,1}(X,\mathbb{R})$ admitting solutions of the Lagrangian phase equation $\Theta(\alpha)=c$ with $c\geq (n-1)\frac{\pi}{2}$.  Then, on this set
\[
{\rm Arg}_{p.v}(Z([\alpha],[\omega]))
\]
defines a concave map into $[\frac{\pi}{2}, \pi)$.  Furthermore, for every $\delta\in (0,\frac{\pi}{2})$, there is a constant $C=C(\delta)>0$, such that
\[
-\exp[-C{\rm Arg}_{p.v}(Z([\alpha],[\omega]))]
\]
is concave on ${\rm Sol}_{\omega}(\Theta, \Gamma_{(n-2)\frac{\pi}{2}+\delta})$.
\end{cor}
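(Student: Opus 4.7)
The plan is to reduce both assertions to Theorem~\ref{thm: main} via the cohomological identity
\[
c_{\Theta}(\omega,[\alpha]) = (n-2)\tfrac{\pi}{2} + {\rm Arg}_{p.v.}(Z([\alpha],[\omega]))
\]
already essentially established in the paragraph preceding the Corollary. First I would record that this identity holds on both solvable sets under consideration: the preceding computation gives $-\int_X(\sqrt{-1}\omega + \alpha)^n \in \mathbb{R}_{>0}\,e^{\sqrt{-1}(c_\Theta - (n-2)\pi/2)}$, so the formula is valid as soon as $c_\Theta - (n-2)\tfrac{\pi}{2}$ lies in the principal branch $(-\pi,\pi]$. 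Since $\Theta<n\tfrac{\pi}{2}$ pointwise, a solution with $c_\Theta \geq (n-2)\tfrac{\pi}{2} + \delta$ satisfies $c_\Theta - (n-2)\tfrac{\pi}{2} \in [\delta,\pi)$, safely inside the principal branch.

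For the first statement, I would apply Theorem~\ref{thm: main} to the pair $(f,\Gamma) = (\Theta, \Gamma_{\geq (n-1)\pi/2})$, with ansatz $W\equiv 0$. The required concavity of $f$ on $\Gamma$ is furnished by Yuan's theorem (Example~\ref{ex: lagPhasePDE}(1)). Theorem~\ref{thm: main} then yields concavity of $[\alpha] \mapsto c_\Theta(\omega,[\alpha])$ on ${\rm Sol}_\omega(\Theta, \Gamma_{(n-1)\pi/2})$, and substituting the identity above gives concavity of ${\rm Arg}_{p.v.}(Z([\alpha],[\omega]))$. The range $[\tfrac{\pi}{2}, \pi)$ is then immediate from the a priori bounds $(n-1)\tfrac{\pi}{2} \leq c_\Theta < n\tfrac{\pi}{2}$.

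For the second statement, fix $\delta \in (0,\tfrac{\pi}{2})$ and let $A = A(\delta)$ be the constant supplied by Collins--Picard--Wu (Example~\ref{ex: lagPhasePDE}(2)), so that $\tilde f := -e^{-A\Theta}$ is concave on $\Gamma_{\geq (n-2)\pi/2 + \delta}$. Because $x\mapsto -e^{-Ax}$ is strictly increasing, $\tilde f$ remains elliptic, and the equation $\tilde f(\alpha) = \tilde c$ has precisely the same admissible solutions as $\Theta(\alpha) = c_\Theta$ under the correspondence $\tilde c = -e^{-A c_\Theta}$; in particular ${\rm Sol}_\omega(\tilde f, \Gamma_{\geq (n-2)\pi/2+\delta}) = {\rm Sol}_\omega(\Theta, \Gamma_{\geq (n-2)\pi/2+\delta})$. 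Applying Theorem~\ref{thm: main} to $(\tilde f,\Gamma_{\geq (n-2)\pi/2+\delta})$ with $W\equiv 0$ yields concavity of $[\alpha] \mapsto -e^{-A c_\Theta(\omega,[\alpha])}$. Inserting the identity, this function equals
\[
-e^{-A(n-2)\pi/2}\,\exp\bigl[-A\,{\rm Arg}_{p.v.}(Z([\alpha],[\omega]))\bigr];
\]
dividing by the positive constant $e^{-A(n-2)\pi/2}$ preserves concavity, so the claim follows with $C := A$.

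The only genuinely non-routine step is the justification of the identity $c_\Theta = (n-2)\tfrac{\pi}{2} + {\rm Arg}_{p.v.}(Z)$, and the main obstacle is ensuring that $c_\Theta$ lies in a range where the principal-value argument is unambiguous. This is handled by the a priori bounds on $c_\Theta$ noted above in each case; once the identity is in place, the two concavity statements drop out of Theorem~\ref{thm: main} by the same mechanism used in Examples~\ref{ex: KahlerHess} and \ref{ex: hessQuoAlg}, the only novelty being that the operator for the second statement is the composite $-e^{-A\Theta}$ rather than $\Theta$ itself.
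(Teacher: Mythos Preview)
Your argument is correct and is precisely the route the paper intends: the corollary is stated immediately after the sentence ``With this discussion, Theorem~\ref{thm: main} implies,'' and you have simply made explicit the two applications of Theorem~\ref{thm: main}---once with $(\Theta,\Gamma_{\geq (n-1)\pi/2})$ using Yuan's concavity, once with $(-e^{-A\Theta},\Gamma_{\geq (n-2)\pi/2+\delta})$ using the Collins--Picard--Wu concavity---together with the cohomological identification of $c_\Theta$ already established in the preceding paragraph.
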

\begin{rk}
The Lagrangian phase equation, also called the deformed Hermitian-Yang-Mills (dHYM) equation, has recently seen a great deal of interest, owing to its importance in mirror symmetry \cite{MMMS, LYZ, CXY, CJY, JY}. In particular, the author, with Jacob and Yau \cite{CJY}, and the author and Yau \cite{CY} have formulated conjectures relating algebraic geometry and the solvability of the dHYM which would, in principle, determine the set ${\rm Sol}_{\omega}(\Theta, \Gamma_{(n-1)\frac{\pi}{2}})$ in terms of algebraic data.
\end{rk}
\end{ex}

So far, our discussion has focused on K\"ahler examples.  We now turn to some non-K\"ahler applications.

\begin{ex}
Let $(X,\omega)$ be a compact complex manifold with $\dim_{\mathbb{C}}X =n\geq 2$.  Recall that the $(n-1,n-1)$ {\em Aeppli} cohomology group of $X$ is defined by 
\[
H^{n-1,n-1}_{A}(X, \mathbb{R}) = \frac{\{ \del\dbar \text{-closed, real $(n-1, n-1)$ forms} \}}{\{\del \gamma + \overline{\del \gamma} : \gamma \in \Lambda^{n-2,n-1}\}}
\]
Note that every Gauduchon metric $\alpha$ on $X$ induces a class $[\alpha^{n-1}]_{A} \in H_{A}^{n-1,n-1}(X,\mathbb{R})$.  We have the following  

\begin{thm}[Sz\'ekelyhidi-Tosatti-Weinkove \cite{STW}]\label{thm: STW}
Let $(X,\omega)$ be a compact complex manifold with a fixed Gauduchon metric $\omega$.  Let $\alpha_0$ be any Gauduchon metric.  Then there exists a unique function $\phi: X\rightarrow \mathbb{R}$, with $\sup_{X}\phi=0$, and a unique constant $c\in\mathbb{R}_{>0}$, and Gauduchon metric $\alpha_{\phi}$, satisfying
\begin{equation}\label{eq: TWansatz}
\alpha_{\phi}^{n-1} = \alpha_0^{n-1} + \ddb \phi \wedge \omega^{n-2} + {\rm Re}(\sqrt{-1}\del \phi\wedge \dbar(\omega^{n-2}))
\end{equation}
and solving
\begin{equation}\label{eq: GaudMonge}
\alpha_{\phi}^{n} = c\omega^{n}.
\end{equation}
\end{thm}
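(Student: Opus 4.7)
The plan is a continuity method proof, following the general strategy of Tosatti--Weinkove and Sz\'ekelyhidi--Tosatti--Weinkove. The proof naturally divides into uniqueness, determination of $c$, openness, and the a priori estimates needed for closedness.

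\textbf{Uniqueness of $\phi$ and $c$.} This is a maximum principle argument, morally identical to Lemma~\ref{lem: well-defined}. Suppose $(\phi_0, c_0)$ and $(\phi_1, c_1)$ are two solutions with $\sup \phi_i = 0$. At the maximum of $\phi_0-\phi_1$ one has $d(\phi_0-\phi_1)=0$ and $\ddb(\phi_0-\phi_1)\le 0$, so the ansatz \eqref{eq: TWansatz} forces $\alpha_{\phi_0}^{n-1}\le \alpha_{\phi_1}^{n-1}$ as $(n-1,n-1)$-forms; taking $n/(n-1)$-powers and using $\alpha_{\phi_i}^n = c_i \omega^n$ gives $c_0 \le c_1$. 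The reverse inequality at the minimum gives $c_0=c_1$, and then the strong maximum principle applied to the linearization yields $\phi_0\equiv \phi_1$. The constant $c$ is cohomologically determined: wedging \eqref{eq: TWansatz} with $\omega$ and integrating, the Gauduchon condition $\ddb \omega^{n-1}=0$ ensures the correction terms integrate to zero, so $\int_X \alpha_\phi^{n-1}\wedge \omega = \int_X \alpha_0^{n-1}\wedge \omega$ is an Aeppli-pairing invariant, and combined with $\alpha_\phi^n = c\omega^n$ this pins down $c>0$.

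\textbf{Openness.} Interpolate the target equation through a continuous path $\mathcal{F}_t(\phi, c)=0$ starting from a trivial reference solution. The linearization at a solution has the form $v\mapsto Lv = \Delta_{g}v + \langle T, dv\rangle$, where $g$ is the Hermitian metric dual to $\alpha_\phi^{n-1}$ and $T$ is a first-order drift arising from the $\mathrm{Re}(\sqrt{-1}\del\phi\wedge\dbar(\omega^{n-2}))$ term. As in Lemma~\ref{lem: open}, $L$ is homotopic through elliptic operators of this type to $\Delta_\omega$, hence Fredholm of index zero with kernel (and by another maximum-principle argument, cokernel) equal to the constants. Adjoining $c$ as a free parameter makes the linearization surjective and the implicit function theorem gives openness of the set of $t \in [0,1]$ for which a smooth solution exists.

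\textbf{Closedness.} This is the main obstacle and the heart of the proof: uniform $C^\infty$ bounds along the continuity path. The $C^0$ estimate comes from Moser iteration (or an ABP-type argument), where the Gauduchon hypothesis permits the integration by parts needed to control $\|\phi\|_{L^p}$ despite the absence of a K\"ahler structure. The real difficulty is the second-order bound: one applies the maximum principle to a quantity like $\log \mathrm{tr}_\omega \alpha_\phi + A\phi^2 - B\phi$, and must absorb numerous torsion and commutator terms produced by $d\omega\ne 0$ and by differentiating the ansatz. This is only feasible after a Cheng--Yau-type gradient estimate bounding $|\del \phi|_\omega^2$ in terms of $\mathrm{tr}_\omega \alpha_\phi$ and $\mathrm{osc}\,\phi$. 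Once a $C^2$ bound is in hand, the equation is concave when rewritten in the variables $\alpha_\phi^{n-1}$, so Evans--Krylov upgrades $C^2$ to $C^{2,\alpha}$, and Schauder bootstrapping yields $C^\infty$. I expect the second-order estimate, and specifically the careful bookkeeping of torsion terms coming from both the non-K\"ahler structure of $\omega$ and the ansatz, to be the decisive technical step.
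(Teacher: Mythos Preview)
This theorem is not proved in the paper: it is quoted as a result of Sz\'ekelyhidi--Tosatti--Weinkove \cite{STW}, and the paper offers only ``a few words'' about the proof strategy before using the statement as input to Theorem~\ref{thm: main}. There is therefore no in-paper proof to compare your proposal against.

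That said, your outline is a faithful sketch of the continuity-method argument in \cite{STW}, and the paper's brief discussion is consistent with it. The one point the paper emphasizes that you do not make explicit is the \emph{reduction step}: under the ansatz~\eqref{eq: TWansatz}, equation~\eqref{eq: GaudMonge} is not attacked directly but is rewritten, via the Hodge star and the map $\alpha_\Omega \mapsto P_\omega(\alpha_\Omega)$ described after the theorem, as an $(n-1,n-1)$ Monge--Amp\`ere equation in the sense of Example~\ref{ex: n-1Eq}, for a new $(1,1)$-form $\alpha_\Omega + \ddb\phi + W(d\phi)$ with an explicit ansatz $W$. It is in these coordinates that the operator is manifestly concave and elliptic on $P^{-1}(\Gamma_n)$, and your $C^0$, gradient, $C^2$, and Evans--Krylov steps are then carried out for this rewritten equation. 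Your sketch implicitly works with $\alpha_\phi^{n-1}$ as the unknown, which is morally the same thing, but the paper's point is precisely that this reduction places the STW equation inside the $(f,\Gamma,W)$ framework of Problem~\ref{prob: genEll}.
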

Observe that, in the above theorem, we clearly have $[\alpha_0^{n-1}]_{A}= [\alpha^{n-1}]_{A}$.  A few words are in order about how this theorem is proved.  In \cite{TW, Pop} it is shown that, under the ansatz~\eqref{eq: TWansatz}, equation~\eqref{eq: GaudMonge} can be reduced to the equation
\[
f(\lambda) = ((n-1)c)^{\frac{1}{n}}
\]
where $f$ is the $(n-1,n-1)$ Monge-Amp\`ere equation discussed in Example~\ref{ex: n-1Eq}, and $\lambda$ are the eigenvalues of a certain {\em related} Hermitian $(1,1)$ form.  It will be important for us how this particular Hermitian form is constructed.  We say that a real $(n-1,n-1)$ form $\Omega$ is Gauduchon if $\Omega$ is positive, and $\ddb \Omega=0$. By a theorem of Michelsohn \cite{Mic} every Gauduchon $(n-1,n-1)$ form $\Omega$ satisfies $\Omega= \alpha^{n-1}$ for a unique Gauduchon metric $\alpha$.  Let $\mathcal{G}$ denote the set of Gauduchon $(n-1,n-1)$ forms, which is a convex cone in the space of real $(n-1,n-1)$ forms on $X$.  Following \cite{TW}, to each $\Omega$ we associate a smooth, not necessarily positive, $(1,1)$ form $\alpha_{\Omega}$ by
\[
(\alpha_{\Omega})_{i\bar{j}} = {\rm Tr}_{\omega}\left( \frac{1}{(n-1)!} *_{\omega}\Omega\right)\omega_{i\bar{j}} - \left(\frac{1}{(n-2)!} *_{\omega}\Omega\right).
\]
Then, imposing~\eqref{eq: TWansatz}, and solving~\eqref{eq: GaudMonge} is equivalent to solving the $(n-1,n-1)$ Monge-Amp\`ere equation in Example~\ref{ex: n-1Eq} with $(1,1)$ form
\[
(\alpha_{\Omega})_{i\bar{j}} + \phi_{i\bar{j}} + W_{i\bar{j}}(d\phi)
\]
where $W$ is a certain {\em anstaz} in the sense of Definition~\ref{def: ansatz}.  The corresponding Gauduchon metric is obtained by applying the $P_{\omega}$ operator described in Example~\ref{ex: n-1Eq}, which acts on $(1,1)$-forms by
\[
P_{\omega}(\beta) = \frac{1}{n-1}\left[\left({\rm Tr}_{\omega}\beta\right)\omega-\beta\right].
\]
A straightforward computation shows that
\begin{equation}\label{eq: STWmet}
\alpha_{\phi} = \frac{1}{(n-1)!}*_{\omega}\Omega + \frac{1}{n-1}\left(\Delta_{\omega}\phi - \ddb\phi\right) + P_{\omega}(W_{i\bar{j}}(d\phi))
\end{equation}

Observe that the linear structure on Gauduchon $(n-1,n-1)$ forms is mapped to the linear structure on $(1,1)$ forms by this construction.  Namely, if $\Omega_i$ are Gauduchon $(n-1,n-1)$ forms, for $i=0,1$, and $\Omega_t = (1-t)\Omega_0 + t\Omega_1$,  then applying the above construction yields
\[
\alpha_{\Omega_t} = (1-t)\alpha_{\Omega_0} + t\alpha_{\Omega_1}.
\]
Thus, by Theorem~\ref{thm: main} we obtain
\begin{cor}\label{cor: Aeppli}
Let $(X,\omega)$ be a compact Hermitian manifold, and let $\mathcal{G}$ denote the space of Gauduchon $(n-1,n-1)$ forms.  For each $\Omega$, let $\alpha_{\Omega, CY}$ be the Gauduchon metric produced by Theorem~\ref{thm: STW}, and define
\[
{\rm Vol}^{1/n}(\Omega) = \left(\frac{\int_{X}\alpha_{\Omega, CY}^n}{\int_{X}\omega^{n}}\right)^{1/n}.
\]
Then ${\rm Vol}^{1/n}$ defines a concave function on the space $\mathcal{G}$.
\end{cor}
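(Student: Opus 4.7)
The plan is to combine Theorem~\ref{thm: main}, applied to the $(n-1,n-1)$ Monge-Amp\`ere operator of Example~\ref{ex: n-1Eq} with $k=n$, together with the $\mathbb{R}$-linearity of the correspondence $\Omega\mapsto\alpha_{\Omega}$ observed in the excerpt just preceding the corollary. Fixing the reference metric $\omega$, the reduction of \cite{TW, Pop} recorded in the excerpt identifies solving the Gauduchon Calabi-Yau equation \eqref{eq: GaudMonge} under the ansatz \eqref{eq: TWansatz} with solving Problem~\ref{prob: genEll} for the triple $(f,\Gamma,W)$, where $(f,\Gamma) = \big(\sigma_n(P(\cdot))^{1/n},\, P^{-1}(\Gamma_n)\big)$ is the $(n-1,n-1)$ operator of Example~\ref{ex: n-1Eq}, $W$ is the ansatz described in the excerpt, and the $(1,1)$-form representing $\Omega$ is $\alpha_{\Omega}$. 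Theorem~\ref{thm: STW} then guarantees $[\alpha_{\Omega}]_W\in{\rm Sol}_{\omega}(f,\Gamma,W)$ for every $\Omega\in\mathcal{G}$, and integrating $\alpha_{\Omega,CY}^n = c_{\Omega}\,\omega^n$ yields
\[
c(\omega,[\alpha_{\Omega}]_W) \;=\; \big((n-1)\,{\rm Vol}(\Omega)\big)^{1/n}
\]
as the constant produced by Lemma~\ref{lem: well-defined}.

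Next, I would invoke the linearity observation already spelled out in the excerpt: since $\mathcal{G}$ is a convex cone and $\Omega\mapsto\alpha_{\Omega}$ is $\mathbb{R}$-linear, any convex combination $\Omega_t := (1-t)\Omega_0 + t\Omega_1$ of Gauduchon $(n-1,n-1)$ forms still lies in $\mathcal{G}$ and satisfies $\alpha_{\Omega_t} = (1-t)\alpha_{\Omega_0} + t\alpha_{\Omega_1}$. Hence the curve $t\mapsto[\alpha_{\Omega_t}]_W$ is a genuine straight segment in the space of real $(1,1)$-forms modulo $W$-equivalence, and all its points lie in ${\rm Sol}_{\omega}(f,\Gamma,W)$.

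Applying Theorem~\ref{thm: main} to $(f,\Gamma,W)$ then shows that $[\alpha]_W\mapsto c(\omega,[\alpha]_W)$ is concave with respect to the linear structure on real $(1,1)$-forms. Pulling this concavity back along the linear map $\Omega\mapsto[\alpha_{\Omega}]_W$ yields concavity of $\Omega\mapsto\big((n-1)\,{\rm Vol}(\Omega)\big)^{1/n}$ on $\mathcal{G}$, and hence of ${\rm Vol}^{1/n}$ since $(n-1)^{1/n}$ is a positive constant. The only real obstacle is bookkeeping: one must verify that the reduction from \cite{TW, Pop} genuinely realizes the Gauduchon Calabi-Yau equation as Problem~\ref{prob: genEll} for the triple $(f,\Gamma,W)$ with $\alpha_{\Omega}$ as the base representative, and that the right-hand side of the reduced equation is a fixed positive multiple of ${\rm Vol}^{1/n}(\Omega)$. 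Once these identifications are in place, the concavity of ${\rm Vol}^{1/n}$ is immediate from Theorem~\ref{thm: main}.
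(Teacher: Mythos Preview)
Your proposal is correct and follows essentially the same route as the paper: the corollary is deduced directly from Theorem~\ref{thm: main} applied to the $(n-1,n-1)$ Monge--Amp\`ere operator with the ansatz $W$ coming from the Tosatti--Weinkove/Popovici reduction, together with the $\mathbb{R}$-linearity of $\Omega\mapsto\alpha_{\Omega}$ already noted in the excerpt. The paper records this as a one-line consequence (``Thus, by Theorem~\ref{thm: main} we obtain''), and your write-up simply makes the bookkeeping explicit.
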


\begin{rk}
Observe that Corollary~\ref{cor: Aeppli} also holds if we consider only Gauduchon $(n-1,n-1)$ forms with fixed Aeppli cohomology class.  In particular, it would be interesting to know if ${\rm Vol}^{1/n}$ achieves a finite maximum on Gauduchon forms in a fixed class, as this could be interpreted as an invariant of the Aeppli cohomology class.  It would furthermore be interesting to understand whether this maximum (provided it exists) is unique.
\end{rk}

\begin{rk}
As pointed out in \cite{STW}, Theorem~\ref{thm: STW} also holds for the $(n-1,n-1) -\sigma_k$ equations, and hence the above construction can be extended to define mixed volume functions on $\mathcal{G}$, which are also concave, in complete analogy with Example~\ref{ex: hessQuoAlg}
\end{rk}

\end{ex}

\begin{ex}
Let $(X,\omega)$ be a Hermitian manifold, and let $\alpha$ be any smooth Hermitian metric.  Then a result of Tosatti-Weinkove \cite{TW2} says that there exists a unique function $\phi:X\rightarrow \mathbb{R}$ such that $\alpha_{\phi} : \alpha+\ddb \phi$ solves the Monge-Amp\`ere equation
\[
\alpha_{\phi}^{n} = e^{b} \omega^{n}
\]
for some constant $b = b(\alpha,\omega)$. Applying Theorem~\ref{thm: main}  we conclude that the map $\alpha \mapsto e^{b/n}$, defined on $\ddb$-equivalence classes of Hermitian metrics, is concave.  This result may have applications to the study of compact complex manifolds admitting big and nef $(1,1)$ classes; see \cite{TS}.
\end{ex}

Inspired by examples~\ref{ex: KahlerHess},~\ref{ex: hessQuoAlg} and~\ref{ex: LagPhaseAG} we make the following definition.

\begin{defn}
We say that an elliptic operator $(f,\Gamma)$  is of {\em cohomological type} if it has the following property:
\begin{enumerate}
\item There is a finite-to-one map $\phi_{f}: \mathbb{R} \rightarrow \mathbb{R}$ such that $\phi_{f}:f(\Gamma) \rightarrow [0,\pi]$ is injective.
\item There is a map $\hat{Z}:\mathbb{R} \rightarrow \mathbb{C}^*$ such that
\[
\widehat{Z}(x) \in \mathbb{R}_{>0}e^{\sqrt{-1}\phi_{f}(x)}.
\]
\item For each admissible $\alpha \in [\alpha]$ there is a volume form $\Omega(\omega, \alpha)$ such that
\[
\int_{X}\widehat{Z}(F_{\omega}(\alpha)) \Omega(\omega, \alpha) =: Z_{F}([\alpha],[\omega]) \in \mathbb{C}
\]
depends only on the classes $[\alpha],[\omega]$.
\end{enumerate}
\end{defn}

It is not hard to see that all of the elliptic operators in examples~\ref{ex: KahlerHess},~\ref{ex: hessQuoAlg} and~\ref{ex: LagPhaseAG} can be fit into this framework.  For example, for the Hessian equation $f(\lambda) = \sigma_{k}^{1/k}$, we can define
\[
\hat{Z}_{k}(x)= (1+\sqrt{-1}x^k)\in \mathbb{C}^{*}.
\]
In particular, we can take $\phi(x) = \arctan(x^k)$.  We define the volume form to be $\Omega_{k}(\alpha,\omega) = \omega^{n}$.  Then we have
\[
\int_{X}\hat{Z}_{k}(\sigma_{k}^{1/k}(\alpha))\omega^{n} = \int_{X} \omega^{n} + \sqrt{-1}\omega^{n-k}\wedge\alpha^{k} = Z_{k}([\alpha],[\omega]).
\]
The Hessian quotient equations of Example~\ref{ex: hessQuoAlg} can be treated similarly, and we have essentially already described how the Lagrangian phase operator can be fit into this framework.  The trivial observation is that when $(f,\Gamma)$ is a cohomological type, the function $c(\omega, [\alpha])$ can be determined by cohomology (together with $\Gamma$).

\begin{lem}
Suppose $(f,\Gamma)$ is of cohomological type.  Then, on the set  of classes $K_{\Gamma, \omega} \subset H^{1,1}(X,\mathbb{R})$ admitting $\Gamma$-admissible representatives, the constant $c$ depends only depends only on $[\alpha], [\omega]$, and is determined by
\[
Z_{F}([\alpha], [\omega]) \in \mathbb{R}_{>0}e^{\sqrt{-1}\phi_{f}(c)}
\]
\end{lem}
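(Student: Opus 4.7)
The plan is to unwind the definition of cohomological type directly: if an admissible representative $\alpha \in [\alpha]$ solves $F_\omega(\alpha) = c$ for some constant $c$, then $F_\omega(\alpha)$ is identically $c$ on $X$, and therefore $\widehat{Z}(F_\omega(\alpha))$ is pointwise equal to the constant $\widehat{Z}(c)$. Plugging this into the defining integral in property (3) of the definition of cohomological type should show that the complex number $Z_F([\alpha],[\omega])$ factors as $\widehat{Z}(c)$ times a manifestly positive real volume integral.

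More precisely, my first step will be to compute
\[
Z_F([\alpha],[\omega]) = \int_X \widehat{Z}(F_\omega(\alpha))\, \Omega(\omega,\alpha) = \widehat{Z}(c)\int_X \Omega(\omega,\alpha),
\]
where the second equality uses that $c$ is constant. Since $\Omega(\omega,\alpha)$ is a volume form, $\int_X \Omega(\omega,\alpha) \in \mathbb{R}_{>0}$, so multiplication by it preserves membership in any ray through the origin of $\mathbb{C}$. Next, I apply property (2): $\widehat{Z}(c) \in \mathbb{R}_{>0} e^{\sqrt{-1}\phi_f(c)}$. Combining these two facts yields
\[
Z_F([\alpha],[\omega]) \in \mathbb{R}_{>0}\, e^{\sqrt{-1}\phi_f(c)},
\]
which is exactly the determining relation in the statement of the lemma.

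The final step is to explain why this relation actually determines $c$. On one hand, Lemma \ref{lem: well-defined} already gives that the constant $c = c(\omega,[\alpha])$ is unique in the class. On the other hand, the right-hand side $Z_F([\alpha],[\omega])$ manifestly depends only on $[\alpha]$ and $[\omega]$, so taking the principal argument (which lies in $[0,\pi]$ by property (1)) recovers $\phi_f(c)$; injectivity of $\phi_f|_{f(\Gamma)}$ then recovers $c$ from $\phi_f(c)$, since $c = F_\omega(\alpha) \in f(\Gamma)$ by admissibility.

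The only mild obstacle I foresee is a bookkeeping point: one needs to be careful that $c \in f(\Gamma)$, so that the injectivity clause of property (1) applies, and not merely that $\phi_f$ is finite-to-one on all of $\mathbb{R}$. This is immediate from admissibility of $\alpha$, but deserves to be stated explicitly. Otherwise, the proof is a direct substitution and requires no analytic input beyond the uniqueness of $c$ already established in Lemma \ref{lem: well-defined}.
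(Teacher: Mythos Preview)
Your proposal is correct and follows essentially the same argument as the paper: pull the constant $\widehat{Z}(c)$ out of the integral defining $Z_F$, use positivity of the volume form to conclude $Z_F([\alpha],[\omega]) \in \mathbb{R}_{>0}e^{\sqrt{-1}\phi_f(c)}$, and then invoke injectivity of $\phi_f$ on $f(\Gamma)$ together with $c\in f(\Gamma)$ to recover $c$. The only differences are cosmetic---you spell out the factorization and the appeal to Lemma~\ref{lem: well-defined} a bit more explicitly than the paper does.
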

\begin{proof}
By assumption, for any class $[\alpha]$ admitting a $\Gamma$-admissible representative we have
\[
Z_{F}([\alpha], [\omega]) = \int_{X}\widehat{Z}(F_{\omega}(\alpha)) \Omega(\omega, \alpha)
\]
On the other hand, if $[\alpha]$ admits a $\Gamma$-admissible solution of $F=c$ then we have $\hat{Z}(F_{\omega}(\alpha)) \in \mathbb{R}_{>0}e^{\sqrt{-1}\phi_{f}(c)}$.  Thus,
\[
Z_{F}([\alpha], [\omega]) \in \mathbb{R}_{>0}e^{\sqrt{-1}\phi_{F}(c)}.
\]
On the other hand, combining the assumption that $\phi_{f} : f(\Gamma)\rightarrow [0,\pi]$ is injective, with that fact that $c\in f(\Gamma)$, it follows that $c$ is uniquely specified by $\phi_{f}(c)$.
\end{proof}

Combining this simple lemma with Theorem~\ref{thm: main} we make the following conclusion.  Let $(X,\omega)$ be a K\"ahler manifold.  Any concave elliptic operator $(f,\Gamma)$ of cohomological type induces a concave function on the subset of $H^{1,1}(X,\mathbb{R})$ consisting of classes containing admissible solutions of $f=c$ for some K\"ahler metric in the class $[\omega]$.

\end{document}